\newtheorem{lemma}{Lemma}[section]
\newcommand{\T}{\mathbb{T}}
\newcommand{\R}{\mathbb{R}}
\newcommand{\Z}{\mathbb{Z}}
\newcommand{\rar}{\mbox{$\rightarrow$}}
\begin{document}

\title{Leitmann's direct method of optimization\\
for absolute extrema of certain problems\\
of the calculus of variations on time scales\thanks{Dedicated to
George Leitmann on the occasion of his 85th birthday. The authors
are grateful to George Leitmann for many helpful remarks and
discussions. Accepted for publication (9/January/2010) 
in \emph{Applied Mathematics and Computation}.}}

\author{Agnieszka B. Malinowska\thanks{On leave
        of absence from Bia{\l}ystok University of Technology,
        Poland (abmalina@wp.pl).}\\
\texttt{abmalinowska@ua.pt}
\and Delfim F. M. Torres\\
\texttt{delfim@ua.pt}}

\date{Department of Mathematics\\
University of Aveiro\\
3810-193 Aveiro, Portugal}

\maketitle


\begin{abstract}
The fundamental problem of the calculus of variations on time scales
concerns the minimization of a delta-integral over all trajectories
satisfying given boundary conditions. This includes the
discrete-time, the quantum, and the continuous/classical
calculus of variations as particular cases.
In this note we follow Leitmann's direct method to
give explicit solutions for some concrete optimal 
control problems on an arbitrary time scale.
\end{abstract}

\smallskip

\noindent \textbf{Mathematics Subject Classification 2000:} 49J05; 49M30; 39A12.

\smallskip


\smallskip

\noindent \textbf{Keywords:} calculus of variations, optimal
control, Leitmann's direct method, time scales.


\section{Introduction}

The calculus on time scales is a recent field that unifies the
theories of difference and differential equations. It has found
applications in several contexts that require simultaneous modeling
of discrete and continuous data, and is nowadays under strong
current research in several different areas \cite{book:ts,book:ts1}.

The area of the calculus of variations on time scales, 
which we are concerned in this paper, 
was born in 2004 \cite{B:04} and is now receiving a lot of attention,
both for theoretical and practical reasons -- see
\cite{A:T,A:B:L:06,A:U:08,B:T:08,F:T:07,F:T:08,AM:T,Basia:post_doc_Aveiro:2,NM:T}
and references therein. Although the theory is already well
developed in many directions, a crucial problem still persists:
solving Euler-Lagrange delta-differential equations on arbitrary
time scales is difficult or even impossible. As a consequence, there
is a lack of concrete variational problems for which a solution is
known. In this paper we follow a different approach. We show that
the direct method introduced by Leitmann in the sixties of the XX
century \cite{Leitmann67} can also be applied to variational
problems on time scales. Leitmann's method is a venerable forty
years old method that has shown through the times to be an universal
and useful method in several different contexts -- see,
\textrm{e.g.},
\cite{Car,CarlsonLeitmann05a,CarlsonLeitmann05b,TE,Leit,Leitmann01,MR1954118,MR2065731,MR2035262,SilvaTorres06,withLeitmann,Wagener}.
Here we provide concrete examples of problems of the calculus of
variations on time scales for which a global minimizer is easily
found by the application of Leitmann's direct approach.


\section{Preliminaries on time scales}

A {\it time scale} $\T$ is an arbitrary nonempty closed subset of
the set $\R$ of real numbers. It is a model of time. Besides
standard cases of $\mathbb{R}$ (continuous time) and $\mathbb{Z}$
(discrete time), many different models are used. For each time scale
$\mathbb{T}$ the following operators are used:
\begin{itemize}

\item the {\it forward jump operator} $\sigma:\T \rar \T$,
$\sigma(t):=\inf\{s \in \T:s>t\}$ for $t<\sup \T$ and
$\sigma(\sup\T)=\sup\T$ if $\sup\T<+\infty$;

\item the {\it backward jump operator} $\rho:\T \rar \T$,
$\rho(t):=\sup\{s \in \T:s<t\}$ for $t>\inf \T$ and
$\rho(\inf\T)=\inf\T$ if $\inf\T>-\infty$;

\item the {\it graininess function} $\mu:\T \rar [0,\infty)$,
$\mu(t):=\sigma(t)-t$.
\end{itemize}

For $\T=\R$ one has $\sigma(t)=t=\rho(t)$ and $\mu(t) \equiv 0$ for
any $t \in \R$. For $\T=\Z$ one has $\sigma(t)=t+1$, $\rho(t)=t-1$,
and $\mu(t) \equiv 1$ for every $t \in \Z$.

A point $t \in \mathbb{T}$ is called: (i) {\it{ right-scattered}} if
$\sigma(t)>t$, (ii) {\it{right-dense}} if $\sigma(t)=t$, (iii) {\it
{left-scattered}} if $\rho(t)<t$, (iv) {\it{left-dense}} if
$\rho(t)=t$, (v) {\it {isolated}} if it is both left-scattered and
right-scattered, (vi) {\it{dense}} if it is both left-dense and
right-dense. If $\sup \T$ is finite and left-scattered we set
$\mathbb{T}^\kappa :=\mathbb{T}\setminus \{\sup\T\}$; otherwise,
$\mathbb{T}^\kappa :=\mathbb{T}$.

We assume that a time scale $\mathbb{T}$ has the topology that it
inherits from the real numbers with the standard topology. Let $f:\T
\rar \R$ and $t \in \T^\kappa$. The {\it delta derivative} of $f$ at
$t$ is the real number $f^{\Delta}(t)$ with the property that given
any $\varepsilon$ there is a neighborhood $U=(t-\delta,t+\delta)
\cap \T$ of $t$ such that
\[|(f(\sigma(t))-f(s))-f^{\Delta}(t)(\sigma(t)-s)| \leq \varepsilon|\sigma(t)-s|\]
for all $s \in U$. We say that $f$ is {\it delta-differentiable} on
$\T$ provided $f^{\Delta}(t)$ exists for all $t \in \T^\kappa$.

We note that if $\T=\R$, then $f:\R \rar \R$ is delta differentiable
at $t \in \R$ if and only if $f$ is differentiable in the ordinary
sense at $t$. Then, $f^{\Delta}(t)=f'(t)$. If $\T=\Z$, then $f:\Z
\rar \R$ is always delta differentiable at every $t \in \Z$ with
$f^{\Delta}(t)=f(t+1)-f(t)$.

A function $f:\mathbb{T} \to \mathbb{R}$ is called
{\emph{rd-continuous}} if it is continuous at the right-dense points
in $\mathbb{T}$ and its left-sided limits exist at all left-dense
points in $\mathbb{T}$.
The set of all rd-continuous functions is denoted by $C_{rd}$.
Similarly, $C^1_{rd}$ will denote the set of functions from $C_{rd}$
whose delta derivative belongs to $C_{rd}$. A continuous function
$f$ is \emph{piecewise rd-continuously delta-differentiable} (we
write $f\in C_{prd}^{1}$) if $f$ is continuous and $f^{\Delta}$
exists for all, except possibly at finitely many $t \in
\T^{\kappa}$, and $f^{\Delta}\in C_{rd}$. It is known that 
piecewise rd-continuous functions possess an \emph{antiderivative},
\textrm{i.e.}, there exists a function $F$ with $F^{\Delta}=f$, and
in this case the \emph{delta-integral} is defined by
$\int_{c}^{d}f(t)\Delta t=F(d)-F(c)$ for all $c,d\in\T$. If
$\mathbb{T}=\mathbb{R}$, then \[\int\limits_{a}^{b}  f(t) \Delta
t=\int\limits_{a}^{b}  f(t) d t,\] where the integral on the right
hand side is the usual Riemann integral; if
$\mathbb{T}=h\mathbb{Z}$, $h>0$, and $a<b$,
then \[\int\limits_{a}^{b}  f(t) \Delta
t=\sum\limits_{k=\frac{a}{h}}^{\frac{b}{h}-1}  h\cdot f(kh)\, .\]
For $f:\T \rar X$, where $X$ is an arbitrary set, we define
$f^\sigma:=f\circ\sigma$.


\section{Leitmann's direct method on time scales}

Let $\T$ be a time scale with at least two points. 
Throughout we let $a,b\in \T$ with $a<b$. 
For an interval $[a,b]\cap \T$ we simply write $[a,b]$.

The problem of the calculus of variations on time scales consists
in minimizing
\begin{equation*}
\mathcal{L}[x]=\int_{a}^{b}L(t,x^{\sigma}(t),x^{\Delta}(t))
\Delta t
\end{equation*}
over all $x\in C_{prd}^{1}([a,b],\R)$
satisfying the boundary conditions
\begin{equation}
\label{bc} x(a)=\alpha,\,  \ x(b)=\beta,
\end{equation}
where $\alpha$, $\beta\in \R$ and
$L:[a,b]^{\kappa}\times \R \times \R \rightarrow \R$. We
assume that $(t,y,v) \rightarrow L(t,y,v)$ has partial continuous
derivatives $L_{y}$ and $L_{v}$, respectively with respect to the
second and third arguments, for all $t\in[a,b]^{\kappa}$, and
$L(\cdot,y,v)$, $L_{y}(\cdot,y,v)$ and $L_{v}(\cdot,y,v)$ are
piecewise rd-continuous in $t$ for all $x\in C_{prd}^{1}([a,b],\R)$. A
function $x\in C_{prd}^{1}([a,b],\R)$
is said to be admissible if it
satisfies the boundary conditions \eqref{bc}.

Let $\tilde{L}:[a,b]^{\kappa}\times \R \times \R \rightarrow \R$. We
assume that $(t,y,v) \rightarrow \tilde{L}(t,y,v)$ has partial
continuous derivatives $\tilde{L}_{y}$ and $\tilde{L}_{v}$,
respectively with respect to the second and third arguments, for all
$t\in[a,b]^{\kappa}$, and $\tilde{L}(\cdot,y,v)$,
$\tilde{L}_{y}(\cdot,y,v)$ and $\tilde{L}_{v}(\cdot,y,v)$ are
piecewise rd-continuous in $t$ for all $x\in C_{prd}^{1}([a,b],\R)$. Consider
the integral
\begin{equation*}
\tilde{\mathcal{L}}[\tilde{x}]=\int_{a}^{b}\tilde{L}(t,\tilde{x}^{\sigma}(t),\tilde{x}^{\Delta}(t))
\Delta t
\end{equation*}

\begin{lemma}[Leitmann's fundamental lemma]
\label{Fund:lemma:Leit} Let $x=z(t,\tilde{x})$ be a transformation
having an unique inverse $\tilde{x}=\tilde{z}(t,x)$ for all $t\in
[a,b]$ such that there is a one-to-one correspondence
\begin{equation*}
    x(t)\Leftrightarrow \tilde{x}(t),
\end{equation*}
for all functions $x\in C_{prd}^{1}([a,b],\R)$ satisfying \eqref{bc} and all
functions $\tilde{x}\in C_{prd}^{1}([a,b],\R)$ satisfying
\begin{equation}\label{bc:trans}
\tilde{x}=\tilde{z}(a,\alpha), \quad \tilde{x}=\tilde{z}(b,\beta).
\end{equation}
If the transformation $x=z(t,\tilde{x})$ is such that there exists a
function $G:[a,b] \times \R \rightarrow \R$ satisfying the
functional identity
\begin{equation}\label{id}
L(t,x^{\sigma}(t),x^{\Delta}(t))-\tilde{L}(t,\tilde{x}^{\sigma}(t),\tilde{x}^{\Delta}(t))
=G^{\Delta}(t,\tilde{x}(t)) \, ,
\end{equation}
then if  $\tilde{x}^{*}$ yields the extremum of
$\tilde{\mathcal{L}}$ with $\tilde{x}^{*}$ satisfying
\eqref{bc:trans}, $x^{*}=z(t,\tilde{x}^{*})$ yields the extremum of
$\mathcal{L}$ for $x^{*}$ satisfying \eqref{bc}.
\end{lemma}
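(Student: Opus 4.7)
The plan is to prove this by integrating the functional identity \eqref{id} over $[a,b]$ and showing that the difference $\mathcal{L}[x] - \tilde{\mathcal{L}}[\tilde{x}]$ is the same constant for every admissible pair linked by the transformation. Once that is established, minimizers of $\tilde{\mathcal{L}}$ on the transformed class correspond, via $x^{*}=z(t,\tilde{x}^{*})$, to minimizers of $\mathcal{L}$ on the original class.

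First I would delta-integrate both sides of \eqref{id} from $a$ to $b$. On the left, by the definition of $\mathcal{L}$ and $\tilde{\mathcal{L}}$, I obtain exactly $\mathcal{L}[x]-\tilde{\mathcal{L}}[\tilde{x}]$. On the right, interpreting $G^{\Delta}(t,\tilde{x}(t))$ as the delta derivative of the composition $t\mapsto G(t,\tilde{x}(t))$, the fundamental theorem of calculus on time scales (stated in Section~2 via antiderivatives) yields
\begin{equation*}
\int_{a}^{b} G^{\Delta}(t,\tilde{x}(t))\,\Delta t
= G(b,\tilde{x}(b)) - G(a,\tilde{x}(a)).
\end{equation*}
So for every $x\in C^{1}_{prd}([a,b],\R)$ satisfying \eqref{bc} and its associated $\tilde{x}\in C^{1}_{prd}([a,b],\R)$ satisfying \eqref{bc:trans},
\begin{equation*}
\mathcal{L}[x] - \tilde{\mathcal{L}}[\tilde{x}]
= G(b,\tilde{z}(b,\beta)) - G(a,\tilde{z}(a,\alpha)).
\end{equation*}

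The second step is to observe that the right-hand side depends only on the fixed data $a,b,\alpha,\beta$ and on the prescribed transformation $\tilde{z}$; it does not depend on the particular admissible function chosen. Denote this constant by $C$. Then $\mathcal{L}[x] = \tilde{\mathcal{L}}[\tilde{x}] + C$ uniformly over the one-to-one correspondence $x\leftrightarrow\tilde{x}$ guaranteed by the hypothesis on $z$. Consequently, if $\tilde{x}^{*}$ is an extremizer (say a minimizer) of $\tilde{\mathcal{L}}$ among functions satisfying \eqref{bc:trans}, then for every admissible $x$, with corresponding $\tilde{x}=\tilde{z}(t,x)$,
\begin{equation*}
\mathcal{L}[x] = \tilde{\mathcal{L}}[\tilde{x}]+C \geq \tilde{\mathcal{L}}[\tilde{x}^{*}]+C = \mathcal{L}[z(t,\tilde{x}^{*})],
\end{equation*}
so $x^{*}=z(t,\tilde{x}^{*})$ is the corresponding extremizer of $\mathcal{L}$. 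The argument is symmetric and applies to maxima as well, giving the extremum statement.

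I do not expect a serious obstacle here: the whole content is the time-scale version of Leitmann's original calculation. The only points requiring a bit of care are (i) reading $G^{\Delta}(t,\tilde{x}(t))$ as the delta derivative of the composed function $t\mapsto G(t,\tilde{x}(t))$, which must lie in the class for which the fundamental theorem on time scales applies, and (ii) using the one-to-one correspondence only to transfer admissibility: any $x$ satisfying \eqref{bc} produces a unique $\tilde{x}$ satisfying \eqref{bc:trans}, so the boundary contributions $G(a,\tilde{z}(a,\alpha))$ and $G(b,\tilde{z}(b,\beta))$ are genuinely fixed constants, which is exactly what makes the two functionals equivalent up to an additive constant.
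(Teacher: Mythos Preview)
Your argument is correct and follows essentially the same route as the paper's proof: integrate the identity \eqref{id} over $[a,b]$, apply the time-scale fundamental theorem to obtain $G(b,\tilde{z}(b,\beta))-G(a,\tilde{z}(a,\alpha))$, and observe that this constant difference transfers extremizers of $\tilde{\mathcal{L}}$ to extremizers of $\mathcal{L}$. Your write-up is in fact slightly more explicit than the paper's in spelling out the final inequality chain, but the underlying idea is identical.
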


\begin{proof}
The proof is similar in spirit to Leitmann's proof
\cite{Leitmann67,Leit,Leitmann01, MR2035262}. Let  $x\in
C_{prd}^{1}([a,b],\R)$ satisfy \eqref{bc} and define functions $\tilde{x}\in
C_{prd}^{1}([a,b],\R)$ through the formula $\tilde{x}=\tilde{z}(t,x)$, $a\leq
t\leq b$. Then $\tilde{x}\in C_{prd}^{1}([a,b],\R)$ and satisfies
\eqref{bc:trans}. Moreover, as a result of \eqref{id}, it follows
that
\begin{equation*}
\begin{split}
\mathcal{L}[x]-\tilde{\mathcal{L}}[\tilde{x}] &=\int_{a}^{b}L(t,x^{\sigma}(t),x^{\Delta}(t))
\Delta
t-\int_{a}^{b}\tilde{L}(t,\tilde{x}^{\sigma}(t),\tilde{x}^{\Delta}(t))
\Delta t\\
&=\int_{a}^{b}G^{\Delta}(t,\tilde{x}(t)) \Delta t
=G(b,\tilde{x}(b))-G(a,\tilde{x}(a))\\
&=G(b,\tilde{z}(b,\beta))-G(a,\tilde{z}(a,\beta)),
\end{split}
\end{equation*}
from which the desired conclusion follows immediately since the
right-hand side of the above equality is a constant depending 
only on the fixed-endpoint conditions \eqref{bc}.
\end{proof}


\section{An illustrative example}

Let $a, b \in \T$, $a < b$, and $\alpha$ and $\beta$ be two given
reals, $\alpha \ne \beta$. We consider the following problem of the
calculus of variations on time scales:
\begin{equation}
\label{illust:Ex:mod}
\begin{gathered}
\text{minimize} \quad \mathcal{L}[x] =\int_{a}^{b}
\left((x^{\Delta}(t))^2
+x^{\sigma}(t) +t x^{\Delta}(t)\right) \Delta t \, , \\
x(a)=\alpha \, , \quad x(b)=\beta \, .
\end{gathered}
\end{equation}
We transform problem \eqref{illust:Ex:mod} into the trivial problem
\begin{gather*}
 \text{minimize} \quad
 \tilde{\mathcal{L}}[\tilde{x}]=\int_{a}^{b}(\tilde{x}^{\Delta}(t))^2
 \Delta t \, , \\
\tilde{x}(a)=0 \, , \quad \tilde{x}(b)=0 \, ,
\end{gather*}
which has the solution $\tilde{x}\equiv 0$. For that we consider the
transformation
\begin{equation*}
x(t)=\tilde{x}(t)+ct+d, \quad c,d\in \R,
\end{equation*}
where constants $c$ and $d$ will be chosen later. According to the
above, we have
\begin{equation*}
x^{\Delta}(t)=\tilde{x}^{\Delta}(t)+c, \quad
x^{\sigma}(t)=\tilde{x}^{\sigma}(t)+c\sigma(t)+d
\end{equation*}
and
\begin{equation*}
\begin{split}
L(t,x^{\sigma}(t),x^{\Delta}(t))
&=(x^{\Delta}(t))^2 +x^{\sigma}(t) + t x^{\Delta}(t)\\
&=(\tilde{x}^{\Delta}(t))^2+2c\tilde{x}^{\Delta}(t)+c^2+\tilde{x}^{\sigma}(t)+c\sigma(t)+d
+t\tilde{x}^{\Delta}(t)+ct\\
&=\tilde{L}(t,\tilde{x}^{\sigma}(t),\tilde{x}^{\Delta}(t))
+[2c\tilde{x}(t)+t\tilde{x}(t)+ct^2+(c^2+d)t]^{\Delta}.
\end{split}
\end{equation*}
In order to obtain the solution to the original problem, it suffices
to chose $c$ and $d$ so that
\begin{equation}\label{eq:const:mod}
    \begin{cases}
ca+d=\alpha\\
cb+d=\beta \, .
\end{cases}
\end{equation}
Solving the system of equations \eqref{eq:const:mod} we obtain
$c=\frac{\alpha-\beta}{a-b}$ and $d =\frac{\beta a-b\alpha}{a-b}$.
Hence, the global minimizer to problem \eqref{illust:Ex:mod} is
\begin{equation*}
x(t)=\frac{\alpha-\beta}{a-b}t+\frac{\beta a-b\alpha}{a-b} \, .
\end{equation*}


\section{Optimal control on time scales}

The study of more general problems of optimal control on time scales
is in its infancy, and results are rare (see
\cite{Basia:post_doc_Aveiro:2,Z:W:X} for some preliminary results).
Similar to the calculus of variations on time scales, there is a
lack of examples with known solution. Here we solve an optimal
control problem on an arbitrary time scale using the idea of
Leitmann's direct method. Consider the global minimum problem
\begin{equation}
\label{ex1:a} \text{minimize} \quad \mathcal{L}[u_1,u_2] = \int_0^1
\left((u_1(t))^2 + u_2(t))^2\right) \Delta t \\
\end{equation}
subject to the control system
\begin{equation}
\label{ex1:b}
\begin{cases}
x_1^\Delta(t) = \exp(u_1(t)) + u_1(t) + u_2(t) \, ,\\
x_2^\Delta(t) = u_2(t) \, ,
\end{cases}
\end{equation}
and conditions
\begin{equation}
\label{ex1:c}
\begin{gathered}
x_1(0) = 0 \, , \quad x_1(1) = 2 \, , \quad
x_2(0) = 0 \, , \quad x_2(1) = 1 \, , \\
u_1(t) \, , u_2(t) \in \Omega = [-1,1] \, .
\end{gathered}
\end{equation}
This example is inspired in \cite{withLeitmann}. It is worth to
mention that a theory based on necessary optimality conditions on
time scales to solve problem \eqref{ex1:a}-\eqref{ex1:c} does not
exist at the moment.

We begin noticing that problem \eqref{ex1:a}-\eqref{ex1:c} is
variationally invariant according to \cite{GouveiaTorres05} under
the one-parameter transformations\footnote{A computer algebra
package that can be used to find the invariance transformations 
is available from the \emph{Maple Application Center} at
\url{http://www.maplesoft.com/applications/view.aspx?SID=4805}}
\begin{equation}
\label{transfEx1} x_1^s = x_1 + s t \, , \quad x_2^s = x_2 + s t \,
, \quad u_2^s = u_2 + s \quad (t^s = t \text{ and } u_1^s = u_1) \,
.
\end{equation}
To prove this, we need to show that both the functional integral
$\mathcal{L}[\cdot]$ and the control system stay
invariant under the
$s$-parameter transformations \eqref{transfEx1}. This is easily seen by direct calculations:
\begin{equation}
\label{inv:Func:Ex1}
\begin{split}
\mathcal{L}^s[u_1^s,u_2^s]&=
\int_0^1 \left(u_1^{s}(t)\right)^2 + \left(u_2^{s}(t)\right)^2  \Delta t \\
&= \int_0^1 u_1(t)^2 + \left(u_2(t) + s \right)^2 \Delta t \\
&= \int_0^1 \left( u_1(t)^2 + u_2(t)^2  +  [s^2 t + 2 s x_2(t)]^\Delta \right) \Delta t \\
&= \mathcal{L}[u_1,u_2] + s^2 + 2s \, .
\end{split}
\end{equation}
We remark that $\mathcal{L}^s$ and $\mathcal{L}$ have the same
minimizers: adding a constant $s^2 + 2s$ to the functional
$\mathcal{L}$ does not change the minimizer of $\mathcal{L}$. It
remains to prove that the control system also remains invariant
under transformations \eqref{transfEx1}:
\begin{equation}
\label{inv:CS:Ex1}
\begin{split}
\left(x_1^{s}(t)\right)^\Delta&=
\left(x_1(t) + s t \right)^\Delta = x_1^\Delta(t) + s = \exp(u_1(t)) + u_1(t) + u_2(t) + s \\
&= \exp(u_1^s(t)) + u_1^s(t) + u_2^s(t) \, , \\
\left(x_2^{s}(t)\right)^\Delta &= \left(x_2(t) + st \right)^\Delta =
x_2^\Delta(t) + s = u_2(t) + s = u_2^{s}(t) \, .
\end{split}
\end{equation}
Conditions \eqref{inv:Func:Ex1} and \eqref{inv:CS:Ex1} prove that
problem \eqref{ex1:a}-\eqref{ex1:c} is invariant under the
$s$-parameter transformations \eqref{transfEx1} up to $\left(s^2 t +
2s x_2\right)^\Delta$. Using the invariance transformations
\eqref{transfEx1}, we generalize problem \eqref{ex1:a}-\eqref{ex1:c}
to a $s$-parameter family of problems, $s \in \mathbb{R}$, which
include the original problem for $s=0$:
\begin{equation*}
\text{minimize} \quad \mathcal{L}^s[u_1^s,u_2^s] = \int_0^1 (u_1^s(t))^2 +
(u_2^s(t))^2 \Delta t
\end{equation*}
subject to the control system
\begin{equation*}
\begin{cases}
\left(x_1^s(t)\right)^\Delta = \exp(u_1^s(t)) + u_1^s(t) + u_2^s(t) \, ,\\
\left(x_2^s(t)\right)^\Delta = u_2^s(t) \, ,
\end{cases}
\end{equation*}
and conditions
\begin{equation*}
\begin{gathered}
x_1^s(0) = 0 \, , \quad x_1^s(1) = 2 + s \, , \quad
x_2^s(0) = 0 \, , \quad x_2^s(1) = 1 + s \, , \\
u_1^s(t) \in [-1,1] \, ,  \quad u_2^s(t) \in [-1+s,1+s] \, .
\end{gathered}
\end{equation*}
It is clear that $\mathcal{L}^s \geq 0$ and that $\mathcal{L}^s=0$ if
$u_1^{s}(t)=u_2^{s}(t) \equiv 0$. The control equations, the
boundary conditions, and the constraints on the values of the
controls, imply that $u_1^{s}(t)=u_2^{s}(t) \equiv 0$ is admissible
only if $s=-1$: $x_1^{s=-1}(t) = t$, $x_2^{s=-1}(t) \equiv 0$.
Hence, for $s= -1$ the global minimum to $\mathcal{L}^s$ is 0 and the
minimizing trajectory is given by
\begin{equation*}
\tilde{u}_1^{s}(t) \equiv 0  \, , \quad \tilde{u}_2^{s}(t) \equiv 0
\, , \quad \tilde{x}_1^{s}(t)= t  \, , \quad \tilde{x}_2^{s}(t)
\equiv 0 \, .
\end{equation*}
Since for any $s$ one has by \eqref{inv:Func:Ex1} that
$\mathcal{L}[u_1,u_2] = \mathcal{L}^s[u_1^s,u_2^s] - s^2 - 2s$, we
conclude that the global minimum for problem $\mathcal{L}[u_1,u_2]$
is 1. Thus, using the inverse functions of the variational
symmetries \eqref{transfEx1},
\begin{equation*}
u_1(t) = u_1^{s}(t) \, , \quad u_2(t) = u_2^{s}(t)- s \, , \quad
x_1(t) = x_1^{s}(t) - s t \, , \quad x_2(t) = x_2^{s}(t) - s t \, .
\end{equation*}
The absolute minimizer for problem \eqref{ex1:a}-\eqref{ex1:c} is
\begin{equation*}
\tilde{u}_1(t) = 0 \, , \quad \tilde{u}_2(t) = 1 \, , \quad
\tilde{x}_1(t) = 2 t \, , \quad \tilde{x}_2(t) = t \, .
\end{equation*}


\section*{Acknowledgements}

Malinowska is a post-doc researcher at the University of Aveiro 
with the support of Bia{\l}ystok University of Technology
via a project of the Polish Ministry of Science and Higher Education. 
Torres was partially supported by the R\&D unit CEOC, via FCT and the EC fund
FEDER/POCI 2010, and partially supported by the research project
UTAustin/MAT/0057/2008.



\end{document}